\newtheorem{thm}{Theorem}[section]
\newtheorem{cor}[thm]{Corollary}
\newtheorem{lem}[thm]{Lemma}
\newtheorem{prp}[thm]{Proposition}
\theoremstyle{definition}
\newtheorem{definition}[thm]{Definition}
\theoremstyle{remark}
\newtheorem{rmk}[thm]{Remark}
\newcommand{\rank}{\operatorname{rank}}
\begin{document}

\title[Canonical forms for boundary conditions]{Canonical forms for boundary conditions of self-adjoint differential operators}

\author{Yorick Hardy}
\address{School of Mathematics,
 University of the Witwatersrand, Johannesburg, Private Bag 3, Wits 2050, South Africa}
\email{yorick.hardy@wits.ac.za}
\author{Bertin Zinsou}
\address{School of Mathematics,
 University of the Witwatersrand, Johannesburg, Private Bag 3, Wits 2050, South Africa}
\email{bertin.zinsou@wits.ac.za}

\subjclass[2010]{34B08, 34B09, 15A21, 15B57}

\keywords{%
 Canonical forms;
 boundary conditions;
 self-adjoint operators;
 the CS-decomposition;
 Hermitian.}

\begin{abstract}
 Canonical forms of boundary conditions are important in the study
 of the eigenvalues of boundary conditions and their numerical
 computations. The known canonical forms for self-adjoint differential
 operators, with eigenvalue parameter dependent boundary conditions,
 are limited to 4-th order differential operators.  We derive
 canonical forms for self-adjoint $2n$-th order differential
 operators with eigenvalue parameter dependent boundary conditions.
 We compare the 4-th order canonical forms to the canonical forms
 derived in this article.
\end{abstract}

\maketitle

\section{Introduction}
Canonical forms of boundary conditions are important in the study of the eigenvalues of boundary conditions and their numerical computations \cite{bail}. In \cite{hao12}, Hao, Sun and Zettl investigate canonical forms of self-adjoint boundary conditions  for fourth order differential operators. They derive three mutually exclusive types of boundary conditions, which are separated, coupled and mixed boundary conditions. In \cite{bao} Bao,  Hao, Sun and Zettl provide  new canonical forms of self-adjoint boundary conditions for regular differential operators of order two and four.

In this paper, we extend the study conducted in \cite{hao12} to $2n$-th order differential operators. We start our investigation with sixth order differential operators with self-adjoint boundary conditions that we extend to $2n$-th differential operators with self-adjoint boundary conditions and we show equivalence between the separated and coupled forms presented in \cite{hao12} and those obtained during our investigation.

In Section \ref{sixth}, we introduce the self-adjoint sixth order differential operators with eigenvalue dependent boundary conditions under consideration. In Section \ref{types}, we present the types of boundary conditions for the self-adjoint sixth order differential operators. Next, using the CS-decomposition, we provide a classification of the different types of canonical forms for self-adjoint sixth order differential operators in Section \ref{cano} that  we extend in Section \ref{cano2n} to canonical forms for self-adjoint $2n$-th order differential operators. Finally, in Section \ref{revisit} we show equivalences between the separated and coupled forms provided in \cite{hao12} with those obtained in this paper.



\section{Self-adjoint sixth order boundary value problems} \label{sixth}

We consider on the interval $J=(a,b)$, $-\infty\le a<b\le \infty$, the sixth order differential equation 
with formally self-adjoint differential expression (with smooth coefficients) \cite[Remark 3.2]{MolZin15}
\begin{align}\label{sixtheq1}My=-(p_3y''')'''+(p_2y'')''+(p_1y')'+p_0y=\lambda wy,\end{align}
where $\dfrac1{p_3}$ exists on $J$, $p_j\in C^j(J)$ are sufficiently smooth real-valued functions on $J$ and
$w\in L(J,\mathbb R)$ is a real-valued Lebesgue integrable function on $J$, $w>0$ a.e. on $J$.
If the coefficients are not smooth, we introduce the quasi-derivatives
\begin{align*}&y^{[0]}=y,\ y^{[1]}=y',\ y^{[2]}=y'', \ y^{[3]}=-p_3y'''\\ &y^{[4]}=(-p_3y''')'+p_2y'', \ y^{[5]}=(-p_3y''')''+(p_2y'')'+p_1y,'\\&y^{[6]}=-(p_3y''')'''+(p_2y'')''+(p_1y')'+p_0y,\end{align*}
and \eqref{sixtheq1} is replaced by the equation $y^{[6]}=\lambda wy$
where $1/p_3, p_2, p_1, p_0,  w\in L(J,\mathbb R)$, $p_3>0$, $w>0$ a.e. on $J$
\cite[p. 3]{MolZin15}.
In either case, the boundary conditions have the same form.
Let $Y=\left(y, y', y'', y^{[3]}, y^{[4]}, y^{[5]}\right)^{\top}$. We now consider the sixth order boundary value problem defined by \eqref{sixtheq1} and the boundary conditions
\begin{align}\label{sixtheq2}AY(a)+BY(b)=0, \quad A, B\in M_6(\mathbb C).\end{align}
For the boundary conditions \eqref{sixtheq2} with the assumptions made so far, \cite[Theorem 2.4]{MolZet2} leads to  
\begin{prp}\label{sixthprop1}Let $C_6$ be the symplectic matrix of order 6 defined by \begin{align}\label{sixtheq4} C_6=\left((-1)^r\delta_{r,7-s}\right)_{r,s=1}^6,\end{align} where  $\delta$ is the Kronecker delta. Then   problems \eqref{sixtheq1}--\eqref{sixtheq2} are self-adjoint if and only if 
\begin{align}\label{sixtheq3}\rank(A:B)=6 \quad \textrm{and}\quad AC_6A^*=BC_6B^*. \end{align}\end{prp}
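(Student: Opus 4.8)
The plan is to deduce the statement as the $n=3$ instance of the general self-adjointness criterion of \cite[Theorem 2.4]{MolZet2}; the only work is to fit \eqref{sixtheq1}--\eqref{sixtheq2} into that framework and to identify the symplectic matrix occurring there with $C_6$ of \eqref{sixtheq4}. Concretely, I would first recall the Lagrange identity for $M$: for $f,g$ in the maximal domain,
\[
\int_{J}\bigl((Mf)\,\overline{g}-f\,\overline{Mg}\bigr)=[f,g](b)-[f,g](a),\qquad [f,g](x)=Y_g(x)^{*}\,C_6\,Y_f(x),
\]
where $Y_f=(f,f',f'',f^{[3]},f^{[4]},f^{[5]})^{\top}$ and the quasi-derivatives $y^{[0]},\dots,y^{[5]}$ are normalised precisely so that the skew-symmetric matrix appearing here is $C_6$ (the minus sign in $y^{[3]}=-p_3y'''$ is exactly what produces the alternating signs of \eqref{sixtheq4}). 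I would then record the three elementary facts $C_6^{*}=-C_6$, $\det C_6\neq0$, and $C_6^{2}=-I_6$, each read straight off \eqref{sixtheq4}.

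The second step is the reduction to linear algebra, which is what \cite[Theorem 2.4]{MolZet2} packages (a Glazman--Krein--Naimark-type argument): the realisation $S$ of $M$ on $\{f\in D_{\max}: AY_f(a)+BY_f(b)=0\}$ is self-adjoint if and only if the set of attainable boundary data $\bigl(Y_f(a),Y_f(b)\bigr)\in\mathbb C^{12}$ is a Lagrangian (maximal totally isotropic) subspace for the non-singular skew-Hermitian form $\mathcal C:=\diag(-C_6,C_6)$, the one for which $[f,g](b)-[f,g](a)=\bigl(Y_g(a),Y_g(b)\bigr)^{*}\mathcal C\,\bigl(Y_f(a),Y_f(b)\bigr)$. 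Because the trace map $f\mapsto\bigl(Y_f(a),Y_f(b)\bigr)$ is onto $\mathbb C^{12}$ under the standing assumptions, this attainable set is exactly $\ker(A:B)$. Thus $S$ is self-adjoint precisely when $\dim\ker(A:B)=6$, i.e.\ $\rank(A:B)=6$, and $\ker(A:B)$ is $\mathcal C$-isotropic; note that a $6$-dimensional totally isotropic subspace here is automatically maximal, since $i\mathcal C$ has signature $(6,6)$.

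The third step is the algebraic identification. Since $\mathcal C$ is invertible with $\mathcal C^{*}=-\mathcal C$, the $\mathcal C$-orthogonal complement of $K:=\ker(A:B)$ is $K^{\perp_{\mathcal C}}=\operatorname{range}\!\bigl(\mathcal C^{-1}(A:B)^{*}\bigr)$, of dimension $6$ when $\rank(A:B)=6$; comparing dimensions, $K$ is totally isotropic iff $K=K^{\perp_{\mathcal C}}$ iff $(A:B)\,\mathcal C^{-1}(A:B)^{*}=0$. Writing out $\mathcal C^{-1}=\diag(-C_6^{-1},C_6^{-1})$, this reads $-A\,C_6^{-1}A^{*}+B\,C_6^{-1}B^{*}=0$, i.e.\ $A\,C_6^{-1}A^{*}=B\,C_6^{-1}B^{*}$, and $C_6^{-1}=-C_6$ turns it into $AC_6A^{*}=BC_6B^{*}$, which is \eqref{sixtheq3}. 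I expect the genuinely delicate points to be bookkeeping: pinning down the sign normalisation so that the Lagrange bracket is represented by $C_6$ rather than $-C_6$ or $C_6^{\top}$ (ultimately harmless, since replacing $\mathcal C$ by $-\mathcal C$, or $C_6$ by $C_6^{-1}=-C_6$, does not change the isotropy condition), and citing \cite[Theorem 2.4]{MolZet2} carefully enough that $\rank(A:B)=6$ — and not merely the isotropy relation — emerges as both necessary and sufficient, in particular when an endpoint of $J$ fails to be regular.
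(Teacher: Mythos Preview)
Your proposal is correct and takes essentially the same approach as the paper: the paper does not give a proof at all but simply presents the proposition as a direct consequence of \cite[Theorem~2.4]{MolZet2}, which is exactly the reduction you outline. Your write-up supplies the details (Lagrange identity, identification of $C_6$, the Lagrangian-subspace reformulation) that the paper leaves implicit in the citation, so it is more explicit than what the paper records but not a different route.
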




\section{Types of boundary conditions of sixth order differential operators}\label{types}
The following theorem gives conditions satisfied by the matrices $A$, $B$ for the problems \eqref{sixtheq1}--\eqref{sixtheq2} to be self-adjoint.
\begin{thm}\label{typesthm1}Assume that the matrices $A, B\in M_6(\mathbb C)$ satisfy \eqref{sixtheq3}. Then

{\rm (i)} $3\le \rank A\le 6$, $3\le \rank B \le 6$;

{\rm (ii)} let $0\le r \le 3$;  if $\rank A=3+r$, then $\rank B =3+r$.\end{thm}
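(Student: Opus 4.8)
The plan is to exploit the rank condition $\rank(A:B)=6$ together with the Hermitian symplectic identity $AC_6A^*=BC_6B^*$ from \eqref{sixtheq3}. First I would record the elementary fact that since $(A:B)$ is a $6\times 12$ matrix of full row rank $6$, we have $\rank A + \rank B \ge \rank(A:B) = 6$, so at least one of $\rank A,\rank B$ is $\ge 3$. To upgrade this to the two-sided bound in (i), the key observation is that $\ker A^* $ and the structure of $C_6$ control $\ker B^*$: if $x\in\ker A^*$ then $AC_6A^*x=0$, hence $BC_6B^*x=0$, so $C_6B^*x\in\ker B$. I would then argue that $C_6$ is invertible (it is a signed anti-diagonal permutation matrix, so $C_6^*=C_6^{-1}$ up to sign and $\det C_6=\pm1$), which lets me move freely between $\ker B$ and $\ker B^*$ via $C_6$.

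The heart of the argument is the following linear-algebra lemma, which I would state and prove: if $A,B\in M_6(\mathbb C)$ satisfy $\rank(A:B)=6$ and $AC_6A^*=BC_6B^*$ with $C_6$ invertible, then $\dim\ker A = \dim\ker B$, equivalently $\rank A=\rank B$. To see this, consider the linear map $C_6B^*\colon \ker A^* \to \ker B$ described above. I claim it is injective: if $C_6B^*x=0$ for $x\in\ker A^*$, then since $x\in\ker A^*$ also $A^*x$ would have to vanish against the row space in a way forced by $\rank(A:B)=6$ — more precisely, $x\in\ker A^*\cap\ker B^*$ means $x$ annihilates every column of both $A$ and $B$, i.e.\ $x$ annihilates the row space of $(A:B)$, which is all of $\mathbb C^6$, so $x=0$. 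Hence $\dim\ker A^*\le\dim\ker B$, i.e.\ $6-\rank A\le 6-\rank B$, so $\rank B\le\rank A$. By symmetry (swap the roles of $A$ and $B$) we get $\rank A\le\rank B$, hence $\rank A=\rank B$. Writing this common value as $3+r$ gives (ii) immediately, and combined with $\rank A+\rank B\ge 6$ we get $2(3+r)\ge 6$, i.e.\ $r\ge 0$, so $\rank A\ge 3$; the upper bound $\rank A\le 6$ is automatic for a $6\times 6$ matrix. This establishes (i) and (ii) simultaneously.

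I would then note that (ii) as stated restricts $r$ to $0\le r\le 3$, which is exactly the range forced by (i), so no separate verification is needed once $\rank A=\rank B$ is known. The main obstacle I anticipate is the injectivity step: one must be careful that the relevant kernels intersect trivially, and the cleanest route is the row-space argument via $\rank(A:B)=6$ rather than any dimension count that could go wrong when $A$ or $B$ is singular. A secondary point worth checking explicitly is that $C_6B^*$ genuinely maps $\ker A^*$ into $\ker B$ (not merely $\ker B^*$): from $BC_6B^*x=0$ we directly get $C_6B^*x\in\ker B$, and since $C_6$ is invertible the map $x\mapsto C_6B^*x$ has the same rank as $x\mapsto B^*x$ on the subspace $\ker A^*$, which is what the counting needs. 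Everything else is bookkeeping with the symplectic identity and the fact that $\det C_6\ne0$.
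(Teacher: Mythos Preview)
Your argument is correct and self-contained, whereas the paper's own proof is simply a citation to \cite[Theorem 3]{wan1} with no details given. The substantive idea you use---that $x\mapsto C_6B^*x$ injects $\ker A^*$ into $\ker B$, with injectivity forced by $\ker A^*\cap\ker B^*=\{0\}$ from $\rank(A:B)=6$---is exactly the kind of elementary linear-algebra computation that underlies the cited result, so your route is not conceptually different from what the reference does, only from what the present paper writes down. One small slip: when you say ``$x$ annihilates the row space of $(A:B)$, which is all of $\mathbb C^6$,'' you mean the \emph{column} space (the row space lives in $\mathbb C^{12}$); equivalently and more cleanly, $\rank(A:B)=6$ means $(A:B)^*$ is injective on $\mathbb C^6$, so $A^*x=B^*x=0$ forces $x=0$. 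With that wording fixed, the proof goes through as you outline it.
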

\begin{proof} See \cite[Theorem 3]{wan1}.\end{proof}
Note that the boundary conditions \eqref{sixtheq2} are invariant under left multiplication by a non singular matrix $G\in M_6(\mathbb C)$ and if $ AC_6A^*=BC_6B^*$, then 
\begin{align*} (GA)C_6(GA)^*=(GB)C_6(GB)^*.\end{align*}
Therefore, the boundary condition form \eqref{sixtheq3} is invariant under elementary matrix row transformations of $(A:B)$.

Next, we define the different types of boundary conditions based on Theorem \ref{typesthm1}.

\begin{definition}\label{typesdef1}\rm Let the hypotheses and notation of Theorem \ref{typesthm1} hold. Then the boundary conditions \eqref{sixtheq2}, \eqref{sixtheq3} are 

\noindent (1) separated if $r=0$,\\
(2) mixed if $r=1,2$,\\
(3) coupled if $r=3$.
 \end{definition}
\begin{rmk}\label{typermk2}\rm Note that the boundary conditions \eqref{sixtheq2} are separated if each of the six boundary conditions involves only one endpoint, coupled if each of the six boundary conditions involves both endpoints, while they are mixed  if there is at least one separated and one coupled boundary conditions.  \end{rmk}

%



\section{Canonical forms for sixth order differential operators}\label{cano}

Equation \eqref{sixtheq3} can be written in the form
\begin{equation}
 \label{sixttheqdsum}
 \rank(A:B) = 6, \qquad
 (A:B)\begin{pmatrix}C_6 & 0 \\ 0 & -C_6\end{pmatrix}(A:B)^* = 0,
\end{equation}
where $\begin{pmatrix}C_6 & 0 \\ 0 & -C_6\end{pmatrix}$ is a skew-Hermitian
matrix with eigenvalues $i$ and $-i$. Thus, each column vector $\mathbf{x}_j^*$ of
$(A:B)^*$ may be written in the form
\begin{equation}
 \label{eq:imi}
 \mathbf{x}_j^* = \mathbf{x}_{j,i}^* + \mathbf{x}_{j,-i}^*
\end{equation}
where $\mathbf{x}_{j,\pm i}^*$ belongs to the eigenspace corresponding
to the eigenvalue $\pm i$. Condition \eqref{sixttheqdsum} may now be
written
\begin{equation}
 \label{eq:eqinp}
 \mathbf{x}_{j,i}\mathbf{x}_{k,i}^* = \mathbf{x}_{j,-i}\mathbf{x}_{k,-i}^*.
\end{equation}
Taking $\mathbf{x}_{j,i}$ as the rows of $X_i$ and similarly for $X_{-i}$,
\eqref{eq:imi} may be summarized as $(A:B)=X_i+X_{-i}$ and \eqref{eq:eqinp}
as 
\begin{equation}
 \label{eq:eqinpsum}
 X_iX_i^*=X_{-i}X_{-i}^*.
\end{equation}
Now decompose
\begin{equation}
 \label{eq:decompi}
 \begin{pmatrix}C_6 & 0 \\ 0 & -C_6\end{pmatrix}
 =
 V\begin{pmatrix}iI_6 & 0 \\ 0 & -iI_6\end{pmatrix}V^*
\end{equation}
where $V$ is an arbitrary unitary matrix providing the diagonalization.
From the ordering of eigenvectors (columns of $V$) in \eqref{eq:decompi} and
the solution \eqref{eq:imi} in terms  of eigenvectors, the matrix $V$
may be chosen so that $(A:B)$ has the form
\begin{equation}
 \label{eq:ABcanon}
 (A:B) = (C : D)V^*.
\end{equation}
Writing
\begin{equation*}
 V =
 \begin{pmatrix}
  \mathbf{y}_{1,i}^* &
  \cdots &
  \mathbf{y}_{6,i}^* &
  \mathbf{y}_{1,-i}^* &
  \cdots &
  \mathbf{y}_{6,-i}^*
 \end{pmatrix},\qquad
 V^* =
 \begin{pmatrix}
  \mathbf{y}_{1,i} \\
  \vdots \\
  \mathbf{y}_{6,i} \\
  \mathbf{y}_{1,-i} \\
  \vdots \\
  \mathbf{y}_{6,-i} \\
 \end{pmatrix},
\end{equation*}
where $V$ is unitary and each $\mathbf{y}_{j,\pm i}$ is an eigenvector corresponding
to the eigenvalue $\pm i$. Equation \eqref{eq:eqinpsum} yields
\begin{equation*}
 X_i
 =
 C
 \begin{pmatrix}
  \mathbf{y}_{1,i} \\
  \vdots \\
  \mathbf{y}_{6,i}
 \end{pmatrix},\qquad
 X_{-i}
 =
 D
 \begin{pmatrix}
  \mathbf{y}_{1,-i} \\
  \vdots \\
  \mathbf{y}_{6,-i}
 \end{pmatrix}
\end{equation*}
so that \eqref{eq:eqinpsum} becomes
\begin{equation*}
 CC^*=DD^*.
\end{equation*}
and, since positive definite square roots are unique,   the singular value decompositions $C=U_C\Sigma_C V_C^*$ and $D=U_D\Sigma_D V_D^*$
show that
\begin{equation*}
 \Sigma_C=(U_C^*U_D)\Sigma_D(U_C^*U_D)^*.
\end{equation*}
Hence
\begin{align*}
 (A:B)&=(U_C\Sigma_C:U_D\Sigma_D)\begin{pmatrix}V_C& 0\\ 0 & V_D\end{pmatrix}^*V^* \\
      &=U_C(\Sigma_C:U_C^*U_D\Sigma_D)\begin{pmatrix}V_C& 0\\ 0 & V_D\end{pmatrix}^*V^* \\
      &=U_C(\Sigma_C:\Sigma_C)\begin{pmatrix}V_C& 0\\ 0 & V_D(U_C^*U_D)^*\end{pmatrix}^*V^* \\
      &=U_C\Sigma_C(I_6:I_6)\begin{pmatrix}V_C& 0\\ 0 & V_D(U_C^*U_D)^*\end{pmatrix}^*V^*
\end{align*}
yields the solution \eqref{eq:imi}  and satisfies \eqref{eq:eqinp}.
Since $\rank(A:B)=6$, we have $\rank(\Sigma_C)=6$ and hence $\Sigma_C$ is invertible.
By invariance of the boundary conditions under elementary row operations, we obtain
the general form
\begin{equation}
 \label{eq:gencanon}
 (A:B)=(I_6:I_6)\begin{pmatrix}V_X& 0\\ 0 & V_Y\end{pmatrix}^*V^*
\end{equation}
where $V_X$ and $V_Y$ are arbitrary unitary matrices. Here, the first
6 columns of $V$ are eigenvectors corresponding to the eigenvalue $i$ of $C_6\oplus(-C_6)$,
and the remaining 6 columns correspond to the eigenvalue $-i$.
We write $V$ as the block matrix
\begin{equation*}
 V = \begin{pmatrix} V_{11} & V_{12} \\ V_{21} & V_{22} \end{pmatrix}
\end{equation*}
so that \eqref{eq:gencanon} becomes
\begin{equation*}
 (A:B) = (V_X^*V_{11}^*+V_Y^*V_{12}^* : V_X^*V_{21}^* + V_Y^*V_{22}^*)
\end{equation*}
where
\begin{equation*}
 \begin{pmatrix}C_6 & 0 \\ 0 & -C_6\end{pmatrix}
 \begin{pmatrix}V_{11} \\ V_{21}\end{pmatrix}
 =
 i\begin{pmatrix}V_{11} \\ V_{21}\end{pmatrix},
 \qquad
 \begin{pmatrix}C_6 & 0 \\ 0 & -C_6\end{pmatrix}
 \begin{pmatrix}V_{12} \\ V_{22}\end{pmatrix}
 =
 -i\begin{pmatrix}V_{12} \\ V_{22}\end{pmatrix}.
\end{equation*}
Again, since the boundary conditions are invariant under row operations, we will assume
\begin{equation}
 \label{eq:finalcanon}
 (A:B) = (V_{11}^*+WV_{12}^* : V_{21}^* + WV_{22}^*)
\end{equation}
where $W=V_XV_Y^*$ is unitary.
Choosing a particular $V$ provides some additional insight. For the purpose
of illustration, we also set $W=I_6$ in the following example. Let
\begin{equation*}
 V_i = \frac1{\sqrt2}
 \begin{pmatrix}
  1 & 0 & 0 \\
  0 & 1 & 0 \\
  0 & 0 & 1 \\
  0 & 0 & -i \\
  0 & i & 0 \\
  -i & 0 & 0
 \end{pmatrix}
  =
 \frac1{\sqrt2}
 \begin{pmatrix} I_3 \\ iC_3 \end{pmatrix},
\end{equation*}
\begin{equation*}
 V_{-i} = \frac1{\sqrt2}
 \begin{pmatrix}
  1 & 0 & 0 \\
  0 & 1 & 0 \\
  0 & 0 & 1 \\
  0 & 0 & i \\
  0 &-i & 0 \\
  i & 0 & 0
 \end{pmatrix}
  =
 \frac1{\sqrt2}
 \begin{pmatrix} I_3 \\ -iC_3 \end{pmatrix}
\end{equation*}
where
\begin{equation*}
 C_3 = \begin{pmatrix} 0 & 0 &-1 \\ 0 & 1 & 0 \\-1 & 0 & 0 \end{pmatrix}.
\end{equation*}
Now consider $V$ given by
\begin{equation}
 \label{eq:Vsimp}
 V =
 \begin{pmatrix}
  V_i & 0      & V_{-i} & 0 \\
  0   & V_{-i} &      0 & V_i
 \end{pmatrix}.
\end{equation}
Hence
\begin{align*}
 (A:B) &= \begin{pmatrix}
           V_i^* + V_{-i}^* & 0 \\
           0 & V_i^* + V_{-i}^*
          \end{pmatrix}
        = \begin{pmatrix}
           \sqrt2I_3 & 0 & 0 & 0\\
           0 & 0 & \sqrt2I_3 & 0
          \end{pmatrix}.
\end{align*}
Here $\text{rank}(A)=\text{rank}(B)=3$.
Choosing $V$ as above, leads to a canonical form for
separated boundary conditions in Lemma \ref{lem:6th}. \\
\indent  Let 
\begin{equation*}
 W=\begin{pmatrix} 0 & I_3 \\ I_3 & 0\end{pmatrix}.
\end{equation*}
Then
\begin{align*}
 (A:B) &= \begin{pmatrix}
            V_i^*    & V_i^* \\
            V_{-i}^* & V_{-i}^*
           \end{pmatrix}
         = \frac1{\sqrt2}
           \begin{pmatrix}
            I_3 & -iC_3 & I_3 & -iC_3 \\
            I_3 &  iC_3 & I_3 &  iC_3
           \end{pmatrix}.
\end{align*}
Here we obtain coupled boundary conditions, leading to a canonical form
in Lemma \ref{lem:6th}.

From \eqref{eq:Vsimp}, we have
\begin{equation*}
 \begin{split}
  V_{11} = \frac1{\sqrt2}\begin{pmatrix} I_3 & 0 \\  iC_3 & 0 \end{pmatrix},\qquad
  V_{12} = \frac1{\sqrt2}\begin{pmatrix} I_3 & 0 \\ -iC_3 & 0 \end{pmatrix},\\
  V_{21} = \frac1{\sqrt2}\begin{pmatrix} 0 & I_3 \\ 0 & -iC_3 \end{pmatrix},\qquad
  V_{22} = \frac1{\sqrt2}\begin{pmatrix} 0 & I_3 \\ 0 &  iC_3 \end{pmatrix}.
 \end{split}
\end{equation*}
 Choosing appropriate $W$ provides the remaining canonical forms. Thus

\begin{align*} 
A &     = \frac1{\sqrt2}\left[
       \begin{pmatrix}
        I_3 &-iC_3 \\
          0 & 0
       \end{pmatrix}
       +
       W
       \begin{pmatrix}
        I_3 &  iC_3 \\
          0 & 0
       \end{pmatrix}
      \right], \\
B &    = \frac1{\sqrt2}\left[
       \begin{pmatrix}
          0 & 0 \\
        I_3 & iC_3
       \end{pmatrix}
       +
       W
       \begin{pmatrix}
          0 & 0 \\
        I_3 & -iC_3
       \end{pmatrix}
      \right].
\end{align*}
Let
\begin{equation*}
 W = \begin{pmatrix} W_1 & W_2 \\ W_3 & W_4 \end{pmatrix}.
\end{equation*}
It follows that
\begin{align*}
 A  & = \frac1{\sqrt2}
        \begin{pmatrix} W_1 & I_3 \\ W_3 & 0 \end{pmatrix}
        \begin{pmatrix} I_3 & I_3 \\ I_3 & -I_3 \end{pmatrix}
        \begin{pmatrix} I_3 & 0 \\ 0 & iC_3 \end{pmatrix}, \\
 B  & = \frac1{\sqrt2}
        \begin{pmatrix} 0 & W_2 \\ I_3 & W_4 \end{pmatrix}
        \begin{pmatrix} I_3 & I_3 \\ I_3 & -I_3 \end{pmatrix}
        \begin{pmatrix} I_3 & 0 \\ 0 & iC_3 \end{pmatrix},
\end{align*}
and hence
\begin{align*}
 \rank(A) &= \rank(I_3)+\rank(W_3),\\
 \rank(B) &= \rank(I_3)+\rank(W_2).
\end{align*}
Necessarily $\rank(W_3)=\rank(W_2)$.
The CS-decomposition, described in detail in \cite{paige94} and \cite[Theorem 2.7,1]{horn12},
provides a useful way to speak about rank. In particular, we obtain the CS-decomposition of $W$
using \cite[Corollary 3,1]{fuhr18}
\begin{equation*}
 W = \begin{pmatrix} U_1 & 0 \\ 0 & U_2 \end{pmatrix}
     \begin{pmatrix} C & S \\ -S & C \end{pmatrix}
     \begin{pmatrix} V_1 & 0 \\ 0 & V_2 \end{pmatrix}
\end{equation*}
for some unitary matrices $U_1$, $U_2$, $V_1$ and $V_2$, and positive semi-definite
diagonal matrices $C$ and $S$ satisfying $C^2+S^2=I_3$. Hence, up to elementary row
operations,
\begin{align*}
 A  & = \frac1{\sqrt2}
        \begin{pmatrix} C & U_1^* \\ -S & 0 \end{pmatrix}
        \begin{pmatrix} V_1 & 0 \\ 0 & I_3 \end{pmatrix}
        \begin{pmatrix} I_3 & I_3 \\ I_3 & -I_3 \end{pmatrix}
        \begin{pmatrix} I_3 & 0 \\ 0 & iC_3 \end{pmatrix}, \\
 B  & = \frac1{\sqrt2}
        \begin{pmatrix} 0 & S \\ U_2^* & C \end{pmatrix}
        \begin{pmatrix} I_3 & 0 \\ 0 & V_2 \end{pmatrix}
        \begin{pmatrix} I_3 & I_3 \\ I_3 & -I_3 \end{pmatrix}
        \begin{pmatrix} I_3 & 0 \\ 0 & iC_3 \end{pmatrix}
\end{align*}
with
\begin{equation*}
 \rank(A) = \rank(B) = \rank(I_3)+\rank(S).
\end{equation*}

When $\rank{S}=0$, then
\begin{equation}
 \label{eq:sepW}
 W=\begin{pmatrix} W_1 & 0 \\ 0 & W_4 \end{pmatrix}
\end{equation}
where $W_1=U_1V_1$ and $W_2=U_2V_2$ are unitary. If $\rank{S}\neq 0$, then
$W$ does not simplify in an obvious way. Thus we have the following Lemma.

\begin{lem}
 \label{lem:6th}
 Let $A$ and $B$ be $6\times 6$ matrices satisfying 
 \begin{equation*}
  \rank(A:B)=6 \quad \textrm{and}\quad AC_6A^*=BC_6B^*.
 \end{equation*}
 Let $Z$ be the matrix
 \begin{equation*}
  Z = \frac1{\sqrt2}
      \begin{pmatrix}
       I_3 & I_3 & 0   & 0   \\
       I_3 &-I_3 & 0   & 0   \\
       0 & 0   & I_3 & I_3 \\
       0 & 0   & I_3 &-I_3 
      \end{pmatrix}
      \begin{pmatrix}
       I_3 & 0   & 0   & 0 \\
       0 &iC_3 & 0   & 0 \\
       0 & 0   & I_3 & 0 \\
       0 & 0   & 0   & iC_3
      \end{pmatrix}.
 \end{equation*}
 There exist a $6\times 6$ non singular matrix $U$,
 $3\times 3$ unitary matrices $U_1$, $U_2$, $V_1$ and $V_2$,
 and positive semi-definite diagonal matrices $C$ and $S$ with $C^2+S^2=I_3$,
 such that
 \begin{equation*}
  (A:B) = U
          \begin{pmatrix}
            C & I_3 & 0 & S \\
           -S & 0 & I_3 & C
          \end{pmatrix}
          \begin{pmatrix}
           V_1 & 0   & 0   & 0 \\
             0 & U_1^* & 0   & 0 \\
             0 & 0   & U_2^* & 0 \\
             0 & 0   & 0   & V_2 
          \end{pmatrix}
          Z
 \end{equation*}
 and, the boundary conditions are
 \begin{enumerate}
  \item separated, if and only $S=0$,
  \item mixed, if and only if $0<\rank(S)<3$.
  \item coupled, if and only if $\rank(S)=3$.
 \end{enumerate}
\end{lem}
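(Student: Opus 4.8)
The plan is to assemble the computation carried out above into one coherent argument and then read off the trichotomy from a rank count. First I would record that the self-adjointness condition \eqref{sixtheq3} is the same as \eqref{sixttheqdsum}, and that diagonalizing the skew-Hermitian matrix $C_6\oplus(-C_6)$ as in \eqref{eq:decompi}, together with the singular value decomposition argument, reduces $(A:B)$ --- up to left multiplication by a nonsingular matrix, under which the boundary conditions are invariant --- to the form \eqref{eq:gencanon}. Specializing the diagonalizing unitary $V$ to the explicit matrix \eqref{eq:Vsimp}, whose first six columns span the $i$-eigenspace and whose last six span the $(-i)$-eigenspace of $C_6\oplus(-C_6)$, produces \eqref{eq:finalcanon}, namely $(A:B)=(V_{11}^*+WV_{12}^*:V_{21}^*+WV_{22}^*)$ for a unitary matrix $W=V_XV_Y^*$. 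Substituting the blocks $V_{ij}$ read off from \eqref{eq:Vsimp} and using that $C_3$ is real symmetric, so that $(iC_3)^*=-iC_3$, then gives the displayed formulas for $A$ and $B$ in terms of the blocks $W_1,W_2,W_3,W_4$ of $W$.

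Next I would apply the CS-decomposition of the unitary $W$, as in \cite{fuhr18}, writing $W=\diag(U_1,U_2)\left(\begin{smallmatrix}C&S\\-S&C\end{smallmatrix}\right)\diag(V_1,V_2)$ with $U_1,U_2,V_1,V_2$ unitary and $C,S$ positive semi-definite diagonal satisfying $C^2+S^2=I_3$. Inserting this into the expressions for $A$ and $B$, performing the single row operation of left multiplication by $\diag(U_1^*,U_2^*)$, and collecting every left-hand factor --- the scalar $\tfrac1{\sqrt2}$, the nonsingular block matrices encountered, and all the elementary row operations used along the way --- into one nonsingular $6\times6$ matrix $U$, the pair $(A:B)$ takes the shape $U\left(\begin{smallmatrix}C&I_3&0&S\\-S&0&I_3&C\end{smallmatrix}\right)\diag(V_1,U_1^*,U_2^*,V_2)\,Z$ with $Z$ as in the statement. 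The one genuinely delicate point is this last identification: I would verify by a direct block multiplication that $\left(\begin{smallmatrix}C&I_3&0&S\\-S&0&I_3&C\end{smallmatrix}\right)\diag(V_1,U_1^*,U_2^*,V_2)\,Z$, with $Z=\tfrac1{\sqrt2}\left(\begin{smallmatrix}I_3&I_3&0&0\\I_3&-I_3&0&0\\0&0&I_3&I_3\\0&0&I_3&-I_3\end{smallmatrix}\right)\diag(I_3,iC_3,I_3,iC_3)$, reproduces exactly the right-hand factors appearing in the earlier factorizations of $A$ and $B$; this bookkeeping, and nothing else, is where an index slip could creep in, so I expect it to be the main obstacle.

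Finally, for the classification I would use the rank identity already obtained, namely $\rank(A)=\rank(B)=\rank(I_3)+\rank(S)=3+\rank(S)$: multiplying the factorizations of $A$ and $B$ on the right by invertible matrices leaves the rank unchanged, and $\left(\begin{smallmatrix}C&U_1^*\\-S&0\end{smallmatrix}\right)$ has rank $3+\rank(S)$ since, $U_1^*$ being invertible, this block matrix is column-equivalent to $\left(\begin{smallmatrix}U_1^*&C\\0&-S\end{smallmatrix}\right)$, which is block upper triangular with invertible top-left block; the argument for $B$ is identical. Since $\rank(S)$ is thereby determined by $(A:B)$, write $r=\rank(S)\in\{0,1,2,3\}$, so that $\rank A=\rank B=3+r$ as in Theorem \ref{typesthm1}. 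Definition \ref{typesdef1} then translates directly: the boundary conditions are separated precisely when $r=0$, i.e.\ $S=0$; mixed precisely when $r\in\{1,2\}$, i.e.\ $0<\rank(S)<3$; and coupled precisely when $r=3$, i.e.\ $\rank(S)=3$. The observation \eqref{eq:sepW}, that $S=0$ forces $W=\diag(W_1,W_4)$ with $W_1,W_4$ unitary, is not needed for the statement but records why the separated case decouples; everything outside the block-multiplication check is either quoted --- the equivalence of \eqref{sixtheq3} and \eqref{sixttheqdsum}, the CS-decomposition, and Theorem \ref{typesthm1} --- or a one-line rank estimate.
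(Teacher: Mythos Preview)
Your proposal is correct and follows essentially the same route as the paper: rewrite \eqref{sixtheq3} as \eqref{sixttheqdsum}, diagonalize $C_6\oplus(-C_6)$ with the explicit $V$ of \eqref{eq:Vsimp} to reach \eqref{eq:finalcanon}, apply the CS-decomposition of $W$, absorb $\diag(U_1^*,U_2^*)$ and the remaining invertible left factors into $U$, and read off the trichotomy from $\rank(A)=\rank(B)=3+\rank(S)$ via Definition~\ref{typesdef1}. Your block-triangular justification of the rank identity is in fact more explicit than the paper's, which simply asserts $\rank(A)=\rank(I_3)+\rank(W_3)$ and $\rank(A)=\rank(I_3)+\rank(S)$ without further comment.
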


\begin{rmk}
 It may be assumed, in this representation of $(A:B)$, that the
 diagonal entries of $C$ are non-increasing   
 and that the diagonal entries of $S$ are non-decreasing.
\end{rmk}

\section{Canonical forms for $2n$-th order differential operators}\label{cano2n}

We consider on the interval $J=(a,b)$, $-\infty\le a<b\le \infty$, the $2n$-th order differential equation 
with formally self-adjoint differential expression (with smooth coefficients) \cite[Remark 3.2]{MolZin15}
\begin{equation}\label{2ntheq}
 My=(-1)^n(p_ny^{(n)})^{(n)}+(p_{n-1}y^{(n-1)})^{(n-1)}+\cdots +(p_1y')'+p_0y=\lambda wy,
\end{equation}
where $\dfrac1{p_n}$ exists on $J$, $p_j\in C^j(J)$ and $w\in L(J,\mathbb R)$, $w>0$ a.e. on $J$.
If the coefficients are not smooth, we introduce the quasi-derivatives \cite{MolZin15}
\begin{align*}
 y^{[1]} &= y', \quad
 y^{[2]} = y'', \quad
 \ldots, \quad
 y^{[n-1]} = y^{(n-1)}, \\
 y^{[n]}   &= (-1)^n p_ny^{(n)}, \\
 y^{[n+1]} &= (-1)^n(p_ny^{(n)})'      +  p_{n-1}y^{(n-1)}, \\
 y^{[n+2]} &= (-1)^n(p_ny^{(n)})''     + (p_{n-1}y^{(n-1)})' + p_{n-2}y^{(n-2)}, \\
 &\,\,\vdots \\
 y^{[2n]}  &= (-1)^n(p_ny^{(n)})^{(n)} + (p_{n-1}y^{(n-1)})^{(n-1)} + \cdots + p_0y,
\end{align*}
and \eqref{2ntheq} is replaced by the equation $y^{[2n]}=\lambda wy$
where $1/p_n, p_{n-1}, \ldots, p_1, p_0,  w\in L(J,\mathbb R)$, $p_n>0$, $w>0$ a.e. on $J$
\cite[p. 3]{MolZin15}.
In either case, the boundary conditions have the same form.
Let $Y=\left(y^{[0]}, \ldots, y^{[2n-1]}\right)^{\top}$. We now consider the $2n$-th order boundary
value problem defined by \eqref{2ntheq} and the boundary conditions
\begin{align}\label{2nthboundary}AY(a)+BY(b)=0, \quad A, B\in M_{2n}(\mathbb C).\end{align}
For the boundary conditions \eqref{2nthboundary}, \cite[Theorem 2.4]{MolZet2} leads to  
\begin{prp}
\label{2nthprop1}Let $C_{2n}$ be the symplectic matrix of order $2n$ defined by
\begin{align}\label{2ntheq4} C_{2n}=\left((-1)^r\delta_{r,2n+1-s}\right)_{r,s=1}^{2n}.\end{align}
Then  problems \eqref{2ntheq}--\eqref{2nthboundary} are self-adjoint if and only if 
\begin{align}\label{2ntheq3}\rank(A:B)=2n \quad \textrm{and}\quad AC_{2n}A^*=BC_{2n}B^*. \end{align}
\end{prp}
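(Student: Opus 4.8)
The plan is to run the argument behind Proposition~\ref{sixthprop1} with $2n$ in place of $6$, reducing the claim to \cite[Theorem 2.4]{MolZet2}. First I would recall the framework that theorem rests on: via the quasi-derivatives $y^{[0]},\dots,y^{[2n-1]}$ the equation $y^{[2n]}=\lambda w y$ becomes a first order linear system of dimension $2n$, the minimal operator $M_{\min}$ generated by $M$ on $L^2(J,w)$ is symmetric, its adjoint is the maximal operator $M_{\max}$, and every self-adjoint restriction of $M_{\max}$ is obtained by imposing $2n$ linearly independent boundary conditions on which the Lagrange (boundary) form vanishes, the space of such conditions being maximal with this property. Under the stated hypotheses on $p_n,\dots,p_0,w$ this is exactly the content of \cite[Theorem 2.4]{MolZet2}, and the only thing that must be supplied by hand is the concrete description of the Lagrange form in the coordinates $Y=(y^{[0]},\dots,y^{[2n-1]})^{\top}$.

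The substantive step is therefore to identify that form with the matrix $C_{2n}$ of \eqref{2ntheq4}. I would establish the Lagrange identity
\[
 \int_J\bigl(My\,\overline{z}-y\,\overline{Mz}\bigr)\,dx=[y,z](b)-[y,z](a),\qquad
 [y,z](x)=Z(x)^{*}\,C_{2n}\,Y(x),
\]
for all $y,z$ in the maximal domain, $Z$ being the analogue of $Y$ for $z$. This follows by iterating integration by parts in $(-1)^n(p_ny^{(n)})^{(n)}+\cdots+(p_1y')'+p_0y$ and tracking which quasi-derivative of $y$ pairs with which quasi-derivative of $\overline z$; the alternating signs $(-1)^r$ and the anti-diagonal pairing $r\leftrightarrow 2n+1-r$ built into $C_{2n}$ fall out of this bookkeeping, and one checks $C_{2n}^{*}=-C_{2n}$ and $C_{2n}^{2}=-I_{2n}$, so $C_{2n}$ is a nonsingular skew-Hermitian symplectic matrix. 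Specializing to $n=3$ recovers \eqref{sixtheq4}, a useful consistency check; the non-smooth-coefficient case needs no separate treatment, since the boundary form is expressed entirely through the $y^{[k]}$ introduced before \eqref{2nthboundary}.

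With the Lagrange form in hand, the boundary condition \eqref{2nthboundary} restricts $M_{\max}$ to $\{y:\;A\,Y(a)+B\,Y(b)=0\}$, i.e.\ to those $y$ with $(Y(a)^{\top},Y(b)^{\top})^{\top}\in\ker(A:B)$. Feeding two such functions into $[y,z](b)-[y,z](a)$ exhibits it as the sesquilinear form with matrix $C_{2n}\oplus(-C_{2n})$ evaluated on $\ker(A:B)$; using $C_{2n}^{2}=-I_{2n}$ (so that this form is nondegenerate), it vanishes on $\ker(A:B)$ exactly when $AC_{2n}A^{*}=BC_{2n}B^{*}$, while $\rank(A:B)=2n$ is exactly the condition that $\ker(A:B)$ have dimension $2n$, i.e.\ be maximal isotropic rather than merely isotropic. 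Invoking \cite[Theorem 2.4]{MolZet2} then yields the equivalence \eqref{2ntheq3}. The main obstacle is the explicit integration-by-parts computation pinning the symplectic matrix down as $C_{2n}$; everything after that is a verbatim transcription of the sixth order case with $6$ replaced by $2n$.
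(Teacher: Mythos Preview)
Your proposal is correct and follows the same route as the paper: the paper does not give an independent proof of this proposition but simply records it as a direct consequence of \cite[Theorem~2.4]{MolZet2}, exactly as it did for Proposition~\ref{sixthprop1}. Your sketch merely unpacks that citation by writing out the Lagrange identity and identifying the boundary form with the matrix $C_{2n}\oplus(-C_{2n})$, which is precisely the input \cite[Theorem~2.4]{MolZet2} requires; nothing beyond this is needed, and your computation that isotropy of $\ker(A:B)$ together with $\rank(A:B)=2n$ is equivalent to \eqref{2ntheq3} is the standard translation.
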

The method in Section \ref{cano} generalizes in a straightforward way. Thus we obtain the following theorem.

\begin{thm}\label{thm:canon}
 Let $A$ and $B$ be $2n\times 2n$ matrices satisfying
 \begin{equation*}
  \rank(A:B)=2n \quad \textrm{and}\quad AC_{2n}A^*=BC_{2n}B^*.
 \end{equation*}
 Let $Z$ be the matrix
 \begin{equation*}
  Z = \frac1{\sqrt2}
      \begin{pmatrix}
       I_n & I_n & 0   & 0   \\
       I_n &-I_n & 0   & 0   \\
       0 & 0   & I_n & I_n \\
       0 & 0   & I_n &-I_n 
      \end{pmatrix}
      \begin{pmatrix}
       I_n & 0   & 0   & 0 \\
       0 &(-1)^{n+1}iC_n & 0   & 0 \\
       0 & 0   & I_n & 0 \\
       0 & 0   & 0   & (-1)^{n+1}iC_n
      \end{pmatrix}.
 \end{equation*}
 Then there exists a $2n\times 2n$ non singular matrix $U$ and
 $n\times n$ unitary matrices $V_1$, $U_1^*$, $U_2^*$ and $V_2$, and
 positive semi-definite diagonal matrices $C$ and $S$ with $C^2+S^2=I_n$,
 such that
 \begin{equation*}
  (A:B) = U
          \begin{pmatrix}
            C & I_n & 0 & S \\
           -S & 0 & I_n & C
          \end{pmatrix}
          \begin{pmatrix}
           V_1 & 0   & 0   & 0 \\
             0 & U_1^* & 0   & 0 \\
             0 & 0   & U_2^* & 0 \\
             0 & 0   & 0   & V_2 
          \end{pmatrix}
          Z.
 \end{equation*}
 and the boundary conditions are
 \begin{enumerate}
  \item separated, if and only if $S=0$,
  \item mixed, if and only $0<\rank(S)<n$.
  \item coupled, if and only if $\rank(S)=n$.
 \end{enumerate}
\end{thm}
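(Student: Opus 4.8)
The plan is to repeat the argument of Section~\ref{cano} with $6$ replaced by $2n$ and $3$ by $n$; the paper already announces that the method ``generalizes in a straightforward way'', so the work is to check that each step survives, the one genuinely new point being the tracking of the sign $(-1)^{n+1}$ produced by the parity of $n$. First I would rewrite \eqref{2ntheq3} as $\rank(A:B)=2n$ and $(A:B)\bigl(\begin{smallmatrix}C_{2n}&0\\0&-C_{2n}\end{smallmatrix}\bigr)(A:B)^*=0$, and record that $C_{2n}$ is a real skew-symmetric signed permutation matrix, so $C_{2n}^*=-C_{2n}$, $C_{2n}C_{2n}^*=I_{2n}$, and hence $C_{2n}^2=-I_{2n}$; thus $C_{2n}\oplus(-C_{2n})$ is skew-Hermitian with eigenvalues $i$ and $-i$, each of multiplicity $2n$. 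Diagonalising $C_{2n}\oplus(-C_{2n})=V(iI_{2n}\oplus(-iI_{2n}))V^*$ with $V$ unitary and splitting each column of $(A:B)^*$ into its $i$- and $(-i)$-eigencomponents, the quadratic constraint collapses, exactly as in the computation around \eqref{eq:eqinpsum}, to $CC^*=DD^*$ for two $2n\times 2n$ matrices $C,D$. Taking singular value decompositions, using uniqueness of the positive semidefinite square root, and using that \eqref{2ntheq3} is preserved under left multiplication of $(A:B)$ by nonsingular matrices (since $G$ sends $AC_{2n}A^*=BC_{2n}B^*$ to $(GA)C_{2n}(GA)^*=(GB)C_{2n}(GB)^*$), I would reduce to $(A:B)=(V_{11}^*+WV_{12}^*:V_{21}^*+WV_{22}^*)$ as in \eqref{eq:finalcanon}, with $W$ an arbitrary $2n\times 2n$ unitary matrix and $V=\bigl(\begin{smallmatrix}V_{11}&V_{12}\\V_{21}&V_{22}\end{smallmatrix}\bigr)$ the diagonaliser partitioned into $2n\times 2n$ blocks.

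Next I would fix a convenient $V$ built from an orthonormal eigenbasis of $C_{2n}$. Partitioning $C_{2n}$ into $n\times n$ blocks gives $C_{2n}=\bigl(\begin{smallmatrix}0&C_n\\(-1)^{n}C_n&0\end{smallmatrix}\bigr)$, and a short computation gives $C_n^\top C_n=I_n$ and $C_n^2=(-1)^{n+1}I_n$. Hence the columns of $V_{\pm i}=\tfrac1{\sqrt2}\bigl(\begin{smallmatrix}I_n\\ \pm(-1)^{n+1}iC_n\end{smallmatrix}\bigr)$ are orthonormal and satisfy $C_{2n}V_{\pm i}=\pm i\,V_{\pm i}$ (for $n=3$ these are the matrices $V_i$ and $V_{-i}$ of Section~\ref{cano}). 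Assembling $V=\bigl(\begin{smallmatrix}V_i&0&V_{-i}&0\\0&V_{-i}&0&V_i\end{smallmatrix}\bigr)$ as in \eqref{eq:Vsimp} and substituting, $(A:B)$ becomes
\begin{equation*}
 \left(\begin{pmatrix}W_1&I_n\\ W_3&0\end{pmatrix}\;:\;\begin{pmatrix}0&W_2\\ I_n&W_4\end{pmatrix}\right)Z ,
\end{equation*}
where $W=\bigl(\begin{smallmatrix}W_1&W_2\\W_3&W_4\end{smallmatrix}\bigr)$ and $Z$ is precisely the matrix in the statement, since $Z=\tfrac1{\sqrt2}\bigl[\bigl(\begin{smallmatrix}I_n&I_n\\I_n&-I_n\end{smallmatrix}\bigr)\oplus\bigl(\begin{smallmatrix}I_n&I_n\\I_n&-I_n\end{smallmatrix}\bigr)\bigr]\bigl[\diag(I_n,(-1)^{n+1}iC_n)\oplus\diag(I_n,(-1)^{n+1}iC_n)\bigr]$.

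Then I would apply the CS-decomposition of $W$ from \cite{fuhr18}, namely $W=(U_1\oplus U_2)\bigl(\begin{smallmatrix}C&S\\-S&C\end{smallmatrix}\bigr)(V_1\oplus V_2)$ with $U_1,U_2,V_1,V_2$ unitary and $C,S$ positive semidefinite diagonal with $C^2+S^2=I_n$, so that $W_1=U_1CV_1$, $W_2=U_1SV_2$, $W_3=-U_2SV_1$ and $W_4=U_2CV_2$. Left multiplying the displayed bracket by $\diag(U_1^*,U_2^*)$ (a row operation, hence harmless) and factoring out block-diagonal unitaries on the right turns it into $\bigl(\begin{smallmatrix}C&I_n&0&S\\-S&0&I_n&C\end{smallmatrix}\bigr)\diag(V_1,U_1^*,U_2^*,V_2)$; collecting all the resulting nonsingular left factors into one nonsingular $U$ gives the asserted form. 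For the trichotomy, the two trailing factors $\bigl(\begin{smallmatrix}I_n&I_n\\I_n&-I_n\end{smallmatrix}\bigr)$ and $\diag(I_n,(-1)^{n+1}iC_n)$ are invertible (the former squares to $2I_{2n}$, the latter is block-diagonal with invertible blocks), so a one-step column reduction of $\bigl(\begin{smallmatrix}W_1&I_n\\W_3&0\end{smallmatrix}\bigr)$ yields $\rank A=n+\rank W_3=n+\rank S$, and symmetrically $\rank B=n+\rank S$. Comparing with the $2n$-th order analogues of Theorem~\ref{typesthm1} and Definition~\ref{typesdef1} --- that is, $n\le\rank A=\rank B\le 2n$, with separated, mixed and coupled corresponding to $\rank A$ equal to $n$, strictly between $n$ and $2n$, and $2n$ respectively --- gives (1)--(3), since $S$ being diagonal and positive semidefinite makes $\rank S=0$ equivalent to $S=0$.

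The only real obstacle I anticipate is clerical rather than conceptual: keeping the exponent $(-1)^{n+1}$ consistent across the eigenvector normalisation, the matrix $Z$ and the middle factor, and, since Section~\ref{types} is written only for the sixth-order case, first recording the (routine) $2n$-th order versions of Theorem~\ref{typesthm1} and Definition~\ref{typesdef1} used in the final step.
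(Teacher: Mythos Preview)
Your proposal is correct and follows essentially the same route as the paper: the paper's own proof of Theorem~\ref{thm:canon} is simply the assertion that the argument of Section~\ref{cano} generalises in a straightforward way, and your write-up carries out precisely that generalisation, including the correct identification of the sign $(-1)^{n+1}$ in the eigenvectors $V_{\pm i}$ of $C_{2n}$ and in $Z$. Your closing caveat about first recording the $2n$-th order versions of Theorem~\ref{typesthm1} and Definition~\ref{typesdef1} is apt, and indeed the paper obtains these from \cite{wan1}.
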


\section{Revisiting canonical forms for fourth order differential operators}\label{revisit}

Hao, Sun and Zettl derived canonical forms for self-adjoint boundary conditions for 
differential equations of order four \cite{hao12}.   In this section we will show some equivalences between the canonical forms in \cite{hao12} and the forms presented in Lemma  \ref{lem:6th}. The  following canonical
forms are given \cite{hao12}. 

\begin{thm}[{\cite[Theorems 3, 4 and 5]{hao12}}]
 \label{thm:zettl}
 Let $A$ and $B$
 be $4\times 4$ matrices satisfying
 \begin{equation*}
  \rank(A:B)=4 \quad \textrm{and}\quad AC_{4}A^*=BC_{4}B^*.
 \end{equation*}
 Then, the boundary conditions are
 \begin{enumerate}
  \item separated, if there exists $4\times 4$ and $8\times 8$ non singular matrices $R$ and $R'$, respectively, such that
        \begin{equation*}
         (A:B) = R
         \begin{pmatrix}
          r_1 & \overline{a_{21}} & 0 & -1 & 0 & 0 & 0 & 0 \\
          a_{21} & r_2 & 1 & 0 & 0 & 0 & 0 & 0 \\
          0 & 0 & 0 & 0 & r_3 & \overline{b_{41}} & 0 & -1 \\
          0 & 0 & 0 & 0 & b_{41} & r_4 & 1 & 0
         \end{pmatrix}
         R'
        \end{equation*}
        for some $r_1,r_2,r_3,r_4\in\mathbb{R}$ and $a_{21},b_{41}\in\mathbb{C}$,
  \item mixed, if there exist $4\times 4$ and $8\times 8$ non singular matrix $R$ and $R'$, respectively, such that
        \begin{equation*}
         (A:B) = R
         \begin{pmatrix}
          r_1 & \overline{a_{21}} & 0 & -1 & -\overline{a_{31}} & -\overline{za_{31}} & 0 & 0 \\
          a_{21} & r_2 & 1 & 0 & -\overline{a_{32}} & -\overline{za_{32}} & 0 & 0 \\
          a_{31} & a_{32} & 0 & 0 & r_3 & \overline{b_{41}} & 0 & -1 \\
          za_{31} & za_{32} & 0 & 0 & b_{41} & r_4 & 1 & 0
         \end{pmatrix} R',
        \end{equation*}
        for some $r_1,r_2,r_3,r_4\in\mathbb{R}$ and $a_{21},a_{31},a_{32},a_{41},a_{42},b_{41}\in\mathbb{C}$,
  \item coupled, if there exist $4\times 4$ and $8\times 8$ non singular matrix $R$ and $R'$, respectively, such that
        \begin{equation*}
         (A:B) = R
         \begin{pmatrix}
          r_1 & \overline{a_{21}} & 0 & -1 & -\overline{a_{31}} & -\overline{a_{41}} & 0 & 0 \\
          a_{21} & r_2 & 1 & 0 & -\overline{a_{32}} & -\overline{a_{42}} & 0 & 0 \\
          a_{31} & a_{32} & 0 & 0 & r_3 & \overline{b_{41}} & 0 & -1 \\
          a_{41} & a_{42} & 0 & 0 & b_{41} & r_4 & 1 & 0
         \end{pmatrix}R'.
        \end{equation*}
        for some $r_1,r_2,r_3,r_4\in\mathbb{R}$ and $a_{21},a_{31},a_{32},a_{41},a_{42},b_{41}\in\mathbb{C}$,
 \end{enumerate}
\end{thm}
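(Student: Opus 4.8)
\emph{The plan is to} establish the three forms by a single case analysis on the common value $\rank A=\rank B$, which by the fourth-order analogue of Theorem \ref{typesthm1} (that is, \cite{wan1} with $n=2$) is exactly $2$, $3$ or $4$, corresponding to the separated, mixed and coupled cases. Throughout I would use two transformations that preserve the self-adjointness relation $AC_4A^*=BC_4B^*$: left multiplication of $(A:B)$ by a non-singular matrix, which supplies the factor $R$ and leaves the boundary condition unchanged (as already noted in the text for the sixth-order case), and a symplectic change of basis of the underlying solution space, which supplies the factor $R'$. The guiding observation is that each endpoint block is $C_4$-isotropic, and the classification is governed by the rank of the coupling between the two endpoints.

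\emph{First I would} reduce each case to a normal shape. Using row operations (absorbed into $R$) together with the basis change $R'$, I would move the ``derivative'' columns, namely those carrying the $\pm1$ entries inherited from $C_4$, into the fixed positions $3,4$ for the endpoint-$a$ rows and $7,8$ for the endpoint-$b$ rows, so that the corresponding $2\times 2$ pivot sub-blocks become the standard symplectic unit $\begin{pmatrix}0&-1\\1&0\end{pmatrix}$. In the separated case ($\rank A=\rank B=2$) the two endpoint blocks decouple, so one treats $A_aC_4A_a^*=0$ and $B_bC_4B_b^*=0$ independently; in the coupled case ($\rank A=\rank B=4$) all four rows survive; and the mixed case ($=3$) is the intermediate situation in which the inter-endpoint coupling has rank one.

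\emph{Next I would} read off the entry constraints from $AC_4A^*=BC_4B^*$. With the pivot blocks standardized, the cross terms involving columns $3,4$ and $7,8$ cancel in pairs, and the relation collapses to two families of conditions: the diagonal position blocks $\begin{pmatrix}r_1&\overline{a_{21}}\\a_{21}&r_2\end{pmatrix}$ and $\begin{pmatrix}r_3&\overline{b_{41}}\\b_{41}&r_4\end{pmatrix}$ must be Hermitian (forcing $r_1,\dots,r_4\in\mathbb{R}$ and the conjugate pairing of $a_{21},b_{41}$), while the coupling block $G$ occurring in $A$ must reappear in $B$ as $-G^*$. This is precisely the pattern in the three target matrices, the mixed case being distinguished by a rank-one coupling $G=\begin{pmatrix}1\\z\end{pmatrix}\begin{pmatrix}a_{31}&a_{32}\end{pmatrix}$ (whence the scalar $z$) and the coupled case by $G$ of full rank two. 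A direct substitution, which I would record but not expand, verifies conversely that every matrix of the stated form satisfies $\rank(A:B)=4$ and $AC_4A^*=BC_4B^*$ with the asserted rank of $A$, completing the classification.

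\emph{The hard part will be} Step~1: justifying that a single \emph{structured} $R'$ simultaneously standardizes the pivot columns at both endpoints while respecting the split symplectic form $C_4\oplus(-C_4)$, rather than an arbitrary invertible $8\times 8$ matrix, which would destroy the self-adjoint structure and trivialize the distinction between the three types. The most delicate point is the mixed case, where the pivot structure is partly separated and partly coupled, so the rank-one coupling direction and its parameter $z$ must be isolated without collapsing to either extreme. Equivalently, if one instead derives the three forms by specializing Theorem \ref{thm:canon} to $n=2$, the analogous difficulty is to show that the CS-decomposition data $C$ and $S$ are fully absorbed into $R$ and $R'$, leaving only the discrete invariant $\rank(S)\in\{0,1,2\}$, and then to match the emergent entries to the named parameters $r_j$, $a_{ij}$, $b_{41}$ and $z$.
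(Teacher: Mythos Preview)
The paper does not prove Theorem~\ref{thm:zettl}; it is quoted verbatim from \cite{hao12} (Theorems~3, 4 and~5 there) and stated without proof. What follows the statement in Section~\ref{revisit} is not a proof of Theorem~\ref{thm:zettl} but a partial demonstration that the forms in Theorem~\ref{thm:zettl} agree with those of Corollary~\ref{cor:4th}, carried out only for the separated and coupled cases and with $R'$ set to the identity. So there is no in-paper proof to compare your proposal against.

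That said, two remarks on your sketch. First, your identification of the ``hard part'' is exactly right, and in fact it exposes a genuine ambiguity in the statement as recorded here: if $R'$ is literally an arbitrary non-singular $8\times 8$ matrix, then any rank-$4$ matrix $(A:B)$ can be transformed to any other, and the three displayed templates do not distinguish the types at all. In \cite{hao12} the $36$ canonical forms arise from choices of pivot columns, so the intended $R'$ is a column permutation (or, in your language, a block-diagonal $\diag(P,P)$ with $P$ symplectic, reflecting a change of fundamental system), not a general invertible matrix. Your proof outline implicitly assumes this restricted class, which is the correct reading; but you should make it explicit, since otherwise Step~1 is vacuous and Step~2 does not follow.

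Second, your alternative route via the $n=2$ case of Theorem~\ref{thm:canon} is closer to what the paper actually does in Section~\ref{revisit}, but note that the paper stops short of absorbing $C$ and $S$ into $R$ and $R'$ in general: it treats only $\rank(S)\in\{0,2\}$ and explicitly declines the full $36$-form comparison as ``too lengthy and cumbersome''. So if you pursue that route you would be going beyond the paper, and the matching of the CS data to the parameters $r_j$, $a_{ij}$, $b_{41}$, $z$ in the mixed case remains to be worked out.
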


We will consider these forms in the context of Theorem \ref{thm:canon}.
In the case of differential equations of order four, Theorem \ref{thm:canon} becomes

\begin{cor}
 \label{cor:4th}
 Let $A$ and $B$ be $4\times 4$ matrices satisfying
 \begin{equation*}
  \rank(A:B)=4 \quad \textrm{and}\quad AC_{4}A^*=BC_{4}B^*.
 \end{equation*}
 Let $Z$ be the matrix
 \begin{equation*}
  Z = \frac1{\sqrt2}
      \begin{pmatrix}
       I_2 & I_2 & 0   & 0   \\
       I_2 &-I_2 & 0   & 0   \\
       0 & 0   & I_2 & I_2 \\
       0 & 0   & I_2 &-I_2 
      \end{pmatrix}
      \begin{pmatrix}
       I_2 & 0   & 0   & 0 \\
       0 & -iC_2 & 0   & 0 \\
       0 & 0   &  I_2  & 0 \\
       0 & 0   & 0   & -iC_2
      \end{pmatrix}.
 \end{equation*}
 Then there exists a $4\times 4$ non singular matrix $U$ and
 $2\times 2$ unitary matrices $V_1$, $U_1^*$, $U_2^*$ and $V_2$, and
 positive semi-definite diagonal matrices $C$ and $S$ with $C^2+S^2=I_2$,
 such that
 \begin{equation*}
  (A:B) = U
          \begin{pmatrix}
            C & I_2 & 0 & S \\
           -S & 0 & I_2 & C
          \end{pmatrix}
          \begin{pmatrix}
           V_1 & 0   & 0   & 0 \\
             0 & U_1^* & 0   & 0 \\
             0 & 0   & U_2^* & 0 \\
             0 & 0   & 0   & V_2 
          \end{pmatrix}
          Z.
 \end{equation*}
 and the boundary conditions are
 \begin{enumerate}
  \item separated, if and only if $S=0$,
  \item mixed, if and only if $\rank(S)=1$.
  \item coupled, if and only if $\rank(S)=2$.
 \end{enumerate}
\end{cor}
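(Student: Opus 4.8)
The plan is to obtain Corollary \ref{cor:4th} as the special case $n=2$ of Theorem \ref{thm:canon}, so the proof consists essentially of checking that the data of Theorem \ref{thm:canon} specialize to the data displayed in the corollary. First observe that the hypotheses match exactly: for $n=2$ the condition $\rank(A:B)=2n$ and $AC_{2n}A^*=BC_{2n}B^*$ is precisely $\rank(A:B)=4$ and $AC_4A^*=BC_4B^*$. Hence Theorem \ref{thm:canon} applies, providing a $4\times 4$ non-singular matrix $U$, $2\times 2$ unitary matrices $V_1,U_1^*,U_2^*,V_2$, and $2\times 2$ positive semi-definite diagonal matrices $C$, $S$ with $C^2+S^2=I_2$, together with the asserted factorization of $(A:B)$.

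Next I would verify that the matrix $Z$ in Corollary \ref{cor:4th} is exactly the matrix $Z$ of Theorem \ref{thm:canon} at $n=2$. The only blocks that depend on $n$ through a sign are the two occurrences of $(-1)^{n+1}iC_n$; since $(-1)^{n+1}=(-1)^3=-1$ when $n=2$, these become $-iC_2$, which is what appears in the corollary's $Z$. The remaining blocks ($I_n\pm I_n$, $I_n$, $0$) become their $2\times 2$ counterparts verbatim, and likewise the middle factor $\begin{pmatrix} C & I_n & 0 & S \\ -S & 0 & I_n & C \end{pmatrix}$ becomes $\begin{pmatrix} C & I_2 & 0 & S \\ -S & 0 & I_2 & C \end{pmatrix}$. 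Thus the displayed factorization in the corollary is literally the one furnished by Theorem \ref{thm:canon}.

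Finally, the classification of the boundary conditions is read off from Theorem \ref{thm:canon}: separated $\iff S=0$ and coupled $\iff \rank(S)=n=2$ transcribe directly. For the mixed case, Theorem \ref{thm:canon} gives the criterion $0<\rank(S)<n=2$; since $\rank(S)$ is a non-negative integer, this holds precisely when $\rank(S)=1$, which is the statement in the corollary. The main (and essentially only) obstacle is confirming the sign normalization $(-1)^{n+1}$ at $n=2$ so that the two expressions for $Z$ genuinely coincide; once that is checked, everything else is a direct substitution of $n=2$ into Theorem \ref{thm:canon}.
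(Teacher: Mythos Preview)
Your proposal is correct and matches the paper's approach exactly: the corollary is presented there simply as the case $n=2$ of Theorem~\ref{thm:canon}, with no further argument, and the only detail worth checking is precisely the sign $(-1)^{n+1}=-1$ in $Z$ and the reduction of $0<\rank(S)<2$ to $\rank(S)=1$, both of which you verify.
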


There are 36 canonical forms according to \cite[Theorem 2]{hao12}, which  yield (by elementary  operations) the forms listed in Theorem  \ref{thm:zettl}.
To show that Theorem \ref{thm:zettl} and Corollary \ref{cor:4th} are equivalent,
we need to show that $U$ (and $C$, $S$, $U_1$, $U_2$, $V_1$ and $V_2$) and $R$ and $R'$ exist for each
canonical form (i.e.\ for each
type of boundary conditions) which gives equality of the forms.
The forms given in both the theorem and the corollary ensure that $AC_4A^*=BC_4B^*$.
An exhaustive comparison of all 36 forms is too lengthy and cumbersome to  pursue here. 
We will show equivalence for two of the forms, which show clear connections
between the two representations of boundary conditions.

First, we consider separated boundary conditions, i.e. $S=0$:
\begin{equation*}
 U
 \begin{pmatrix}
   V_1 & U_1^* & 0   & 0 \\
     0 & 0   & U_2^* & V_2
 \end{pmatrix}
 Z
 =
 R
 \begin{pmatrix}
  A_{11} & C_2 &      0 &   0 \\
       0 &   0 & B_{21} & C_2
 \end{pmatrix}
 R'
\end{equation*}
where the left hand side is obtained from the separated boundary conditions form
of Corollary \ref{cor:4th} and the right hand side from the separated boundary conditions form
of Theorem \ref{thm:zettl}. Thus for example,
\begin{equation*}
 A_{11}=\begin{pmatrix}r_1&\overline{a_{21}} \\ a_{21} & r_2\end{pmatrix}
\end{equation*}
is Hermitian, and similarly for $B_{21}$. Without loss of generality we may assume $R=I_4$,
and we will also assume $R'=I_4$.
We obtain
\begin{equation*}
 U
 \begin{pmatrix}
  V_1 & U_1^* & 0   & 0 \\
    0 & 0   & U_2^* & V_2
 \end{pmatrix}
 =
 \frac1{\sqrt2}
 \begin{pmatrix}
  A_{11}+iI_2 & A_{11}-iI_2 &           0 &           0 \\
            0 &           0 & B_{21}+iI_2 & B_{21}-iI_2
 \end{pmatrix}.
\end{equation*}
From
\begin{align*}
 U\begin{pmatrix} V_1 \\ 0 \end{pmatrix} &= \frac1{\sqrt2}\begin{pmatrix} A_{11}+iI_2 \\ 0 \end{pmatrix}, &
 U\begin{pmatrix} U_1^* \\ 0 \end{pmatrix} &= \frac1{\sqrt2}\begin{pmatrix} A_{11}-iI_2 \\ 0 \end{pmatrix}, \\
 U\begin{pmatrix} 0 \\ U_2^* \end{pmatrix} &= \frac1{\sqrt2}\begin{pmatrix}  0\\  B_{21}+iI_2 \end{pmatrix}, &
 U\begin{pmatrix} 0 \\ V_2 \end{pmatrix} &= \frac1{\sqrt2}\begin{pmatrix}  0\\ B_{21}-iI_2  \end{pmatrix},
\end{align*}
we find
\begin{equation*} 
 U^{-1} = \frac{i}{\sqrt2}\begin{pmatrix} U_1^*-V_1 & 0 \\ 0 & V_2-U_2^* \end{pmatrix}.
\end{equation*}
Thus,
\begin{align*}
 \frac{i}{2}(U_1^*-V_1)(A_{11}+iI_2) &= V_1 &
 \frac{i}{2}(U_1^*-V_1)(A_{11}-iI_2) &= U_1^* \\
 \frac{i}{2}(V_2-U_2^*)(B_{21}+iI_2) &= U_2^* &
 \frac{i}{2}(V_2-U_2^*)(B_{21}-iI_2) &= V_2
\end{align*}
so that the matrices $W_1=U_1V_1$ and $W_4=U_2V_2$ in \eqref{eq:sepW} are unitary, where
\begin{align*}
 V_1^*U_1^* &= I_2-2i(A_{11}+iI_2)^{-1} = (A_{11}-iI_2)(A_{11}+iI_2)^{-1}, \\
     U_2V_2 &= I_2-2i(B_{21}+iI_2)^{-1} = (B_{21}-iI_2)(B_{21}+iI_2)^{-1},
\end{align*}
are unitary since $A_{11}$ and $B_{21}$ are Hermitian. Conversely, the matrices
\begin{align*}
 A_{11} &= -i(V_1^*U_1^*-I_2)^{-1}(V_1^*U_1^*+I_2) \\
 B_{21} &= -i(U_2V_2-I_2)^{-1}(U_2V_2+I_2) 
\end{align*}
are Hermitian whenever $V_1$, $U_1^*$, $U_2^*$ and $V_2$ are unitary.

Next, we consider coupled boundary conditions, where $\rank(S)=2$:
\begin{equation*}
 U
 \begin{pmatrix}
   CV_1 & U_1^* & 0   & SV_2 \\
  -SV_1 & 0   & U_2^* & CV_2 
 \end{pmatrix}
 Z
 =
 R
 \begin{pmatrix}
  A_{11} & C_2 &-A_{21}^* &   0 \\
  A_{21} &   0 & B_{21}   & C_2
 \end{pmatrix}
 R'
\end{equation*}
where the left hand side is obtained from the coupled boundary conditions form
of Corollary \ref{cor:4th} and the right hand side from the coupled boundary conditions form
of Theorem \ref{thm:zettl}. Hence $A_{11}$ and $B_{21}$ are Hermitian and $A_{21}$ is non singular.
Without loss of generality we may assume $R=I_4$. We will also assume $R'=I_4$. Thus
\begin{equation*}
 U
 \begin{pmatrix}
   CV_1 & U_1^* & 0   & SV_2 \\
  -SV_1 & 0   & U_2^* & CV_2 
 \end{pmatrix}
 =
 \frac1{\sqrt2}
 \begin{pmatrix}
  A_{11}+iI_2 & A_{11}-iI_2 & -A_{21}^* & -A_{21}* \\
  A_{21} &  A_{21} & B_{21}+iI_2 & B_{21}-iI_2
 \end{pmatrix}.
\end{equation*}
Since $U$ is an arbitrary invertible matrix, we will begin with the equivalent
form
\begin{multline*}
 U
 \begin{pmatrix}
   V_2^*S^{-1}CV_1 & V_2^*S^{-1}U_1^* & 0   & I_2 \\
  -I_2 & 0   & V_1^*S^{-1}U_2^* & V_1^*S^{-1}CV_2 
 \end{pmatrix}
 \\
 =
 \frac1{\sqrt2}
 \begin{pmatrix}
  A_{11}+iI_2 & A_{11}-iI_2 & -A_{21}^* & -A_{21}^* \\
  A_{21} &  A_{21} & B_{21}+iI_2 & B_{21}-iI_2
 \end{pmatrix}.
\end{multline*}
Let $U'$ be invertible such that
\begin{equation*}
 U = \begin{pmatrix}
        I_2 & A_{21}^*(B_{21}+iI_2)^{-1} \\
      -A_{21}(A_{11}-iI_2)^{-1} & I_2
     \end{pmatrix}^{-1} U'.
\end{equation*}
which is sensible since the matrix
\begin{multline*}
 \begin{pmatrix}
  I_2 & A_{21}^*(B_{21}+iI_2)^{-1} \\
  -A_{21}(A_{11}-iI_2)^{-1} & I_2
 \end{pmatrix} \\
 =
 \left(
 \begin{pmatrix}
  A_{11} & 0 \\
  0 & B_{21}
 \end{pmatrix}
 +
 \begin{pmatrix}
  -iI_2 & A_{21}^* \\
  -A_{21} & iI_2
 \end{pmatrix}
 \right)
 \begin{pmatrix}
  (A_{11}-iI_2)^{-1} & 0 \\
  0 & (B_{21}+iI_2)^{-1}
 \end{pmatrix}
\end{multline*}
is invertible since
\begin{equation*}
 \begin{pmatrix}
  -iI_2 & A_{21}^* \\
  -A_{21} & iI_2
 \end{pmatrix}
\end{equation*}
is skew-Hermitian and invertible (here we take
the block determinant \cite[Theorem 3]{silvester00} which yields a positive
definite matrix $I_2+A_{21}^*A_{21}$). Furthermore, we set
\begin{equation*}
 U' = 
    - \sqrt2 i
    \begin{pmatrix}
     (B_{21}+iI_2)(A_{21}^*)^{-1} & 0 \\
     0 & -(A_{11}-iI_2)A_{21}^{-1}
    \end{pmatrix}^{-1} U''.
\end{equation*}
It follows that,
\begin{equation*}
 U''
 \begin{pmatrix}
   V_2^*S^{-1}CV_1 & V_2^*S^{-1}U_1^* & 0   & I_2 \\
  -I_2 & 0   & V_1^*S^{-1}U_2^* & V_1^*S^{-1}CV_2 
 \end{pmatrix}
  =
 \begin{pmatrix}
  K_{11} & K_{12} & 0      & I_2 \\
  -I_2 & 0      & K_{23} & K_{24}
 \end{pmatrix}
\end{equation*}
where
\begin{align*}
 \SwapAboveDisplaySkip
 K_{11} &= \frac{i}2\left[(B_{21}+iI_2)(A_{21}^*)^{-1}(A_{11}+iI_2)+A_{21}\right],\\
 K_{12} &= \frac{i}2\left[(B_{21}+iI_2)(A_{21}^*)^{-1}(A_{11}-iI_2)+A_{21}\right],\\
 K_{23} &= -\frac{i}2\left[(A_{11}-iI_2)A_{21}^{-1}(B_{21}+iI_2)+A_{21}^*\right], \\
 K_{24} &= -\frac{i}2\left[(A_{11}-iI_2)A_{21}^{-1}(B_{21}-iI_2)+A_{21}^*\right].
\end{align*}
We must have $U''=I_4$, and since $K_{24}=K_{11}^*$ we can directly find
$V_1$, $V_2$ and $S$ and $C$ using the singular value decomposition of
$K_{11}$ and the fact that $S^2+C^2=I_2$ where $S$ and $C$ are positive semi-definite
diagonal matrices. It remains to show that unitary $U_1$ and $U_2$ exist
and satisfy the equations, i.e. if and only if
\begin{equation*}
 K_{12}K_{12}^*-K_{11}K_{24} = I_2,\qquad
 K_{23}K_{23}^*-K_{24}K_{11} = I_2.
\end{equation*}
Straight forward calculation establishes that these equalities hold. We note that $\rank(S)=2$
since $K_{12}K_{12}^*=I_2+K_{11}K_{11}^*$ is positive definite and hence $S\le I_2$.

\end{document}